\newcommand{\ch}{\mathbf{1}}
\newcommand{\Z}{\mathbb{Z}}
\newcommand{\R}{\mathbb{R}}
\newcommand{\N}{\mathbb{N}}
\newcommand{\Xb}{\mathbf{X}}
\newcommand{\al}{\alpha}
\newcommand{\ga}{\gamma}
\newcommand{\del}{\delta}
\newcommand{\ep}{\epsilon}
\newcommand{\la}{\lambda}
\newcommand{\br}{\vspace{3 mm}}
\newcommand{\tri}{\bigtriangleup}
\newcommand{\supp}{{\rm{supp\,}}}
\newtheorem{thm}{Theorem}[section]
\newtheorem{cor}[thm]{Corollary}
\newtheorem{lem}[thm]{Lemma}
\theoremstyle{definition}
\newtheorem{defn}[thm]{Definition}
\newtheorem{prob}[thm]{Problem}
\numberwithin{equation}{section}
\begin{document}
\title[A universal hypercyclic representation]
{A universal hypercyclic representation}

\author{Eli Glasner and Benjamin Weiss}

\address{Department of Mathematics\\
     Tel Aviv University\\
         Tel Aviv\\
         Israel}
\email{glasner@math.tau.ac.il}

\address {Institute of Mathematics\\
 Hebrew University of Jerusalem\\
Jerusalem\\
 Israel}
\email{weiss@math.huji.ac.il}

\date{January 22, 2013}

\begin{abstract}
For any countable group, and also for any locally compact second countable,
compactly generated topological group, $G$, we show the existence of a ``universal"
hypercyclic (i.e. topologically transitive) representation on a Hilbert space, in the sense that
it simultaneously models every possible ergodic probability measure preserving free action of $G$.
\end{abstract}

\subjclass[2000]{47A16, 47A35, 47A67, 47D03, 37A05, 37A15, 37A25, 37A30}

\keywords{hypercyclic, frequently hypercyclic, universal linear system, ergodic
system, Hilbert space, compactly generated groups}

\thanks{This research was supported by a grant of ISF 668/13}

\maketitle

\tableofcontents

\section{Introduction}

A bounded linear operator $S$ on a Banach space $B$ is said to be {\em hypercyclic} if there are vectors
$v \in B$ such that the sequence $\{S^n v\}_{n \ge 0}$ is dense in $B$. It is called 
{\em frequently hypercyclic} if there are vectors $v$ such that for any non-empty open set 
$U \subset B$ one has
\begin{equation}\label{fh}
{\liminf}_{N \to \infty} \frac1N \sum_{n=1}^N \ch_U(S^nv) > 0.
\end{equation}
One way to get such a property is for there to be a globally supported $S$-invariant probability measure
$\mu$ on $B$ such that the dynamical system $(B,S,\mu)$ is ergodic. In that case (\ref{fh}) will hold for
$\mu$-a.e. $v$ by Birkhoff's ergodic theorem. We shall call $S$ {\em universal} if
for every ergodic probability measure preserving dynamical system
$\Xb = (X,\mathcal{B},\mu, T)$,
there exists an $S$-invariant probability measure $\nu$ on $B$
which is positive on every nonempty open subset of $B$ and such that the dynamical systems
$\Xb$ and $(B,Borel(B),\nu, S)$ are isomorphic.
More generally, we have the following definition.

\begin{defn}
For a topological group $G$, a linear representation as operators on a Banach 
space $B$ will be called {\em universal} if
for every ergodic probability measure preserving free $G$-action
$
\Xb =   (X,\mathcal{B},\mu, \{T_g\}_{g \in G}),
$ 
there exists an $\{S_g\}_{g \in G}$-invariant probability
measure $\nu$ on $B$ which is positive on every nonempty open subset of $B$ and such
that the $G$-actions $\Xb$ and $(B, Borel(B), \nu, \{S_g\}_{g \in G})$ are isomorphic.
\end{defn}

In this work we show the existence of a universal representation on Hilbert space 
for the following classes of groups.
\begin{enumerate}
\item
All countable discrete groups.
\item
All locally compact, second countable, compactly generated groups.
\item
Groups $G$ of the form $G = \cup_{n=1}^\infty K_n$ where $K_1 < K_2 < \cdots $ is an increasing
sequence of compact open subgroups.
\end{enumerate}


The precise statement for a countable infinite group is as follows:

\begin{thm}\label{main}
Let $G$ be a countable infinite discrete group. There exists a
faithful representation of $G$, $g \mapsto S_g$, as a group of bounded
linear operators on a separable Hilbert space $H$ 
which is universal.
\end{thm}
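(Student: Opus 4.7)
The plan is to build, once and for all, a Hilbert space $H$ together with a faithful bounded representation $g \mapsto S_g$, and then, given any ergodic free $\Xb = (X,\Bcal,\mu,T)$, to construct a $G$-equivariant Borel map $\phi : X \to H$ whose pushforward $\nu = \phi_*\mu$ is an $S$-invariant probability measure of full topological support. If $\phi$ is in addition injective modulo $\mu$-null sets, we obtain the desired isomorphism $\Xb \cong (H, \mathrm{Borel}(H), \nu, S)$.

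\emph{The representation.} Fix a symmetric countable generating set of $G$ with word length $|\cdot|$, and choose $a>0$ large enough that $\sum_{g\in G} e^{-2a|g|} < \infty$ (possible for any countable group after, if necessary, enlarging the generating set). Let $H_1 = \ell^2(G, e^{-2a|\cdot|})$ and $H = H_1 \otimes \ell^2(\N)$. Set $S_g = L_g \otimes I$ where $L_g$ is left translation, $(L_g f)(h) = f(g^{-1}h)$. The reverse triangle inequality gives $\|L_g\|_{H_1} \le e^{a|g|}$, so each $S_g$ is a bounded invertible operator; by construction $g \mapsto S_g$ is a homomorphism, and faithfulness is immediate since $L_g$ permutes the basis $\{\delta_h\}$ freely.

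\emph{The equivariant embedding.} By a generator theorem for free ergodic p.m.p.\ actions of countable groups, we may assume $\Xb$ is realized as a shift on a countable-alphabet shift space. Pick a sequence of bounded Borel functions $(\varphi_n)_{n\ge 1}$ on $X$ that generates the Borel $\sigma$-algebra under the $G$-action, with $\|\varphi_n\|_\infty \le \alpha_n$ and $\sum_n \alpha_n^2 < \infty$, and define
\[
\phi(x) \;=\; \sum_{n\ge 1}\sum_{g\in G} \varphi_n(T_{g^{-1}}x)\,\delta_g \otimes e_n .
\]
Convergence of the series in $H$ is guaranteed by the summability choices; equivariance $\phi\circ T_g = S_g\circ\phi$ follows immediately, and the generating property of $(\varphi_n)$ makes $\phi$ injective modulo $\mu$. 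Hence $\nu = \phi_*\mu$ realises $\Xb \cong (H,\nu,S)$.

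\emph{Full support and the main obstacle.} The serious issue is to ensure that $\nu$ charges every nonempty open subset of $H$. Since finitely supported vectors are dense, it suffices to prove that for any $v = \sum_{(g,n) \in F} c_{g,n}\,\delta_g \otimes e_n$ with $F$ finite and any $\ep > 0$,
\[
\mu\big\{x : \|\phi(x) - v\|_H < \ep\big\} \;>\; 0.
\]
In the symbolic model this is a combinatorial statement: any prescribed finite pattern of $(\varphi_n)$-values on a finite window $F\subset G$ must be visible on a set of positive $\mu$-measure. For free ergodic actions of arbitrary countable groups this is delivered by a Rokhlin-type quasitiling argument combined with a careful, possibly randomised, choice of the dictionary $(\varphi_n)$ that fills out the space of admissible joint configurations. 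The principal difficulty of the theorem lies precisely in reconciling the three competing demands of measurable equivariance, mod-null injectivity, and topological density of the image in $H$; this forces a subtle interplay between the weight $w$, the alphabet of the symbolic model, and the dictionary $(\varphi_n)$, and is the technical heart of the proof.
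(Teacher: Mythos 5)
There are two genuine gaps, one of which is fatal to the framework as you set it up.

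First, your Hilbert space does not exist for the groups the theorem is actually about. If $G$ is countable but not finitely generated, any symmetric generating set is infinite, so there are infinitely many elements of word length $1$ and $\sum_{g\in G}e^{-2a|g|}=\infty$ for every $a>0$; enlarging the generating set only makes this worse. Thus the weighted space $\ell^2(G,e^{-2a|\cdot|})$ with bounded translations is only available in the finitely generated case, which is Theorem \ref{FG} of the paper. The paper's way around this is the key idea you are missing: embed $G$ into a two-generated group $H$ (Higman--Neumann--Neumann), build the random-walk weight $w_H$ on $H$ where the generator argument applies, then restrict to $G$ and renormalize; the inherited estimates $\frac1{M_b}w(g)\le w(gb)\le M_b w(g)$ are what make the translations $S_g$ bounded on $l_2(G,w)$ for an arbitrary countable $G$.

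Second, and worse, full support is not just left unproved in your proposal --- it is impossible within your hypotheses. Since $\|\varphi_n\|_\infty\le\alpha_n$ with $\sum_n\alpha_n^2<\infty$ and the weight is summable, you get a uniform bound $\|\phi(x)\|^2\le\bigl(\sum_n\alpha_n^2\bigr)\bigl(\sum_g e^{-2a|g|}\bigr)=:R^2$ for all $x$, so $\nu=\phi_*\mu$ is carried by the closed ball of radius $R$ and assigns zero mass to every open set outside it; no ``careful, possibly randomised'' choice of dictionary satisfying your summability constraints can fix this. The paper avoids the obstruction by using a single function $f$ that is not uniformly bounded but only $L_4$-controlled ($\|f_n\|_4<1$), so that a.e.\ finiteness of $\|\phi_f(x)\|$ requires the Jones--Rosenblatt--Tempelman ergodic theorem (Theorem \ref{JRT}) rather than crude summability; full support and injectivity are then achieved by the inductive construction: using freeness (via \cite[Lemma 3.1]{W}, which needs no amenability, so a ``Rokhlin-type quasitiling'' is the wrong tool for a general countable group) one finds towers $\{gE\}_{g\in B_N}$ over a base of small measure on which $f$ is overwritten to match each target finite-support vector $\xi_{n+1}$ within $r_{n+1}$ on a set of positive measure, while value-splitting against a dense sequence of sets $A_i$ forces $\phi_f$ to be injective mod $\mu$. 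Since you explicitly defer exactly this construction as ``the technical heart,'' the proposal omits the main content of the proof, and its quantitative setup would have to be changed (unbounded $f$, $L_4$ control, an a.e.\ ergodic theorem) before that heart could even be supplied.
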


Note that when a transformation $T_g$ acts ergodically on $(X,\mathcal{B},\mu)$ it follows
that the operator $S_g$ is a frequently hypercyclic operator \cite[Proposition 6.23]{BM}.
In particular by choosing the $G$-system
$\Xb$ to be mixing (which is always possible), we see that each $S_g$ with $\{g^n : n \in \Z\}$ infinite,
is frequently hypercyclic. In the special case where $G=\Z$, the group of integers, we obtain
(as $S_1$) a universal hypercyclic operator on Hilbert space.

We note that N. S. Feldman has previously proved the following topological version of our result for
a hypercyclic operator (\cite{Fe} and \cite[Theorem 2.27]{BM}).

\begin{thm}
There exists a hypercyclic operator $T$ acting on a separable Hilbert space $\mathcal{H}$ which
has the following property. For any compact metrizable space $K$ and any continuous map $f : K \to K$, there exists a $T$-invariant compact set $L \subset \mathcal{H}$ such that $f$ and
$T\restriction_L$ are topologically conjugate.
\end{thm}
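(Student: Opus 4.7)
My plan is to construct $T$ as an explicit vector-valued Rolewicz-type backward shift, and to realize every $(K,f)$ inside $\mathcal{H}$ by a Bebutov-style coding. I take $H_0$ to be a separable infinite-dimensional Hilbert space, put $\mathcal{H} = \ell^2(\mathbb{N}; H_0)$ (still a separable Hilbert space), and define
$$T(v_0, v_1, v_2, \ldots) = 2(v_1, v_2, v_3, \ldots).$$
First I would verify hypercyclicity of $T$ via Kitai's criterion. On the dense subspace $D \subset \mathcal{H}$ of finitely supported sequences one has $T^n v = 0$ for all large $n$, and the right inverse $S(v_0, v_1, \ldots) = (0, v_0/2, v_1/2, \ldots)$ preserves $D$, satisfies $TS = I$, and obeys $\|S^n v\| = 2^{-n}\|v\| \to 0$. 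Hence $T$ is hypercyclic.

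Given a compact metrizable $(K,f)$, I would choose a continuous injection $\phi \colon K \hookrightarrow H_0$; such a $\phi$ exists because $K$ embeds into the Hilbert cube $[0,1]^{\mathbb{N}}$, which in turn embeds into $H_0$. Set
$$\iota(x) := \bigl(2^{-n}\phi(f^n x)\bigr)_{n \ge 0}.$$
Three routine checks finish the construction. (i) Since $\|\iota(x)\|^2 \le \tfrac{4}{3}\sup_{y \in K}\|\phi(y)\|^2$ uniformly, dominated convergence yields continuity of $\iota \colon K \to \mathcal{H}$. (ii) The direct computation
$$T\iota(x) = 2\bigl(2^{-(n+1)}\phi(f^{n+1}x)\bigr)_{n\ge 0} = \bigl(2^{-n}\phi(f^n(fx))\bigr)_{n\ge 0} = \iota(fx)$$
shows $\iota$ intertwines $f$ and $T$. (iii) If $\iota(x) = \iota(y)$, reading off the zeroth coordinate gives $\phi(x) = \phi(y)$ and hence $x = y$, so $\iota$ is injective. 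Therefore $L := \iota(K)$ is a compact $T$-invariant subset of $\mathcal{H}$, and $\iota$ is a topological conjugacy between $f$ and $T\restriction_L$.

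The only subtle point, and really the only place where I see a potential obstacle, is the insistence that $H_0$ be infinite-dimensional. A single continuous function $K \to \mathbb{C}$ cannot be injective once $K$ has high topological dimension (e.g.\ when $K$ is itself a Hilbert cube), so the scalar Rolewicz shift $2B$ on $\ell^2(\mathbb{N})$ would fail. Replacing scalars by vectors in $H_0$ absorbs this dimensional barrier while keeping $\mathcal{H}$ a separable Hilbert space and $T$ hypercyclic, so no substantive difficulty remains beyond this observation.
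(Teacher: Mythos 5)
Your proposal is correct and is essentially the paper's own construction: the same weighted backward shift $(Tu)_n = 2u_{n+1}$ on $\ell^2(\mathbb{N};H_0)$, the same Hilbert-cube embedding of $K$ into $H_0$, and the same coding map $x \mapsto (2^{-n}\phi(f^n x))_{n\ge 0}$ intertwining $f$ with $T$. The only addition is your explicit Kitai-criterion check of hypercyclicity, which the paper leaves as a remark that this is a standard hypercyclic operator.
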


In this theorem the operator is not invertible, it is in fact one of the simplest hypercyclic operators
defined as follows. Let $\mathcal{H}$ be the Hilbert space of sequences $u = (u_n)_1^\infty$, 
where $u_n \in l_2(\N)$ and $\|u\|^2 = \sum_1^\infty \|u_n\|^2$.
The operator $T$ is given by $(Tu)_n = 2u_{n+1}$.
Since $l_2(\N)$ contains a copy of the Hilbert cube and any compact metric space $K$
imbeds in the Hilbert cube, there is a homeomorphism $\phi_0 : K \to l_2(\N)$.
The map $\phi : K \to \mathcal{H}$ is defined by 
$$
\phi(z)_n = \frac{1}{2^n}\phi_0(f^nz),
$$
and it is easily seen that $\phi$ intertwines the actions of $f$ and $T$.
Our operator will also be defined by first defining a function and then evaluating the function along orbits,
but both the construction of the Hilbert space and the function are considerably more involved.

In Section 2 we handle the case where $G$ is finitely generated and then, in
Section 3, use an embedding theorem of Higman, Neumann and Neumann \cite{HNN}
to prove the theorem for general countable groups.
In Section 4 we consider an analogous theorem for compactly generated, second countable 
locally compact groups. 

Our doubts as to the existence of a topological analogue of the Higman-Neumann-Neumann
theorem, which will ensure the possibility of embedding a general second countable
locally compact topological group in a compactly generated group, were recently
confirmed in \cite{CC}. Thus, if an extension of Theorem \ref{CG} to the general
second countable locally compact group is valid,  a different route needs to be found.

Finally, in the last section we will show that the theorem still holds for groups
of the form $G = \cup_{n=1}^\infty K_n$ where $K_1 < K_2 < \cdots $ is an increasing
sequence of compact open subgroups.

In the proof of Theorem \ref{main} it is not hard to see that our arguments work for any $\ell_p(G,w)$
for $1 < p < \infty$ (though not for $\ell_1(G,w)$ as we need the convergence a.e. supplied by Theorem 
\ref{JRT}). 
We conclude the introduction with the following general problem.

\begin{prob}
Which separable Banach spaces (or more generally separable Fr\'echet spaces)
can serve as universal models for ergodic, countable group actions, replacing
Hilbert space in Theorem \ref{main} ?
\end{prob}

\br

\section{The finitely generated case}\label{S:fg}

\subsection{The Hilbert space representation}
Let $G$ be a finitely generated group with $\mathfrak{a}= \{a_1^{\pm 1}, a_2^{\pm 1}, \dots, a_d^{\pm 1}\}$,
a symmetric set of generators. Set
\begin{equation}\label{rho}
\rho = \frac{1}{2d +1} \left(\del_e + \sum_{i=1}^d \del_{a_i^{\pm 1}} \right),
\end{equation}
a probability measure on $G$.
Let $p_1, p_2, \dots $ be a sequence of positive real numbers with $\sum_{n=1}^\infty p_n =1$
and such that for some $C >0$, $\frac{p_n}{p_{n+1}} \le C$ for every $n$. We set
$$
w(g) = \sum_{n=1}^\infty p_n \rho^{*n}(g) \qquad g \in G.
$$
Here $\rho^{*n} = \rho * \rho * \cdots * \rho$ ($n$-times) is the $n$-th convolutional power
of $\rho$.

For $\xi \in \R^G$ let
\begin{equation}\label{norm}
\|\xi\|^2 : = \sum_{g \in G} \xi^2(g) w(g)
=  \sum_{n=1}^\infty  \sum_{g \in G}   p_n \xi^2(g) \rho^{*n}(g).
\end{equation}
The Hilbert space $H = l_2(G,w)$ is defined as:
$$
\{\xi \in \R^G : \|\xi\| < \infty\}.
$$

%
%

\begin{lem}\label{S}
For each $a \in \mathfrak{a}$ the operator
$S_a : l_2(G,w) \to l_2(G,w)$ defined by
$$
(S_a \xi)(g) =\xi(ga)
$$
is a bounded linear operator with $\|S_a\| \le \sqrt{(2d + 1)C}$ and the map $a \mapsto S_a$
defines a linear representation of $G$ in $l_2(G,w)$.
\end{lem}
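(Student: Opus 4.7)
The plan is to verify the norm estimate by a change of variable together with a key pointwise comparison between $w(g)$ and $w(ga^{-1})$, and then observe that the homomorphism property is automatic because $S_a$ is literally a right translation.

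First, I would change variables $g \mapsto ga^{-1}$ in the defining sum to rewrite
\begin{equation*}
\|S_a\xi\|^2 \;=\; \sum_{g \in G} \xi^2(ga)\,w(g) \;=\; \sum_{g \in G} \xi^2(g)\,w(ga^{-1}).
\end{equation*}
So the entire lemma reduces to the pointwise weight comparison
\begin{equation*}
w(ga^{-1}) \;\le\; (2d+1)\,C \cdot w(g) \qquad \text{for all } g \in G,\ a \in \mathfrak{a}.
\end{equation*}

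The key step in proving this comparison is to exploit the fact that $\rho$ charges every generator (and the identity) with mass $\tfrac{1}{2d+1}$. Writing $\rho^{*(n+1)}(g) = \sum_{h} \rho^{*n}(h)\rho(h^{-1}g)$ and keeping only the term $h = ga^{-1}$ (so that $h^{-1}g = a \in \mathfrak{a}$), I get the lower bound
\begin{equation*}
\rho^{*(n+1)}(g) \;\ge\; \rho^{*n}(ga^{-1})\cdot \rho(a) \;=\; \frac{1}{2d+1}\,\rho^{*n}(ga^{-1}).
\end{equation*}
Rearranging and summing against $p_n$, then re-indexing $m = n+1$ and invoking the hypothesis $p_{m-1} \le C\,p_m$,
\begin{equation*}
w(ga^{-1}) \;=\; \sum_{n=1}^\infty p_n\,\rho^{*n}(ga^{-1}) \;\le\; (2d+1)\sum_{n=1}^\infty p_n\,\rho^{*(n+1)}(g) \;\le\; (2d+1)\,C \sum_{m=2}^\infty p_m\,\rho^{*m}(g),
\end{equation*}
which is bounded by $(2d+1)C\,w(g)$. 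Plugging back into the change-of-variable formula yields $\|S_a\xi\|^2 \le (2d+1)C\,\|\xi\|^2$, giving both the boundedness of $S_a$ on $l_2(G,w)$ and the stated norm bound.

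For the representation property, a direct check from the definition gives $(S_a S_b \xi)(g) = (S_b\xi)(ga) = \xi(gab) = (S_{ab}\xi)(g)$, so $S_a S_b = S_{ab}$ on all of $\R^G$. Since $\mathfrak{a}$ is symmetric and generates $G$, and each $S_a$ ($a \in \mathfrak{a}$) preserves the subspace $l_2(G,w)$ with finite operator norm, this extends to a well-defined linear representation $g \mapsto S_g$ of $G$ (each $S_g$ is bounded, with a norm bound that grows at most geometrically in word length). The only real content of the lemma is thus the weight comparison above; that is where the specific structure of $w$ built from a uniformly slowly decaying $(p_n)$ and the convolution powers of $\rho$ is used.
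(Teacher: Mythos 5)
Your proof is correct and follows essentially the same route as the paper: the key inequality $\rho^{*n}(ga^{-1})\,\rho(a)\le\rho^{*(n+1)}(g)$ with $\rho(a)=\frac{1}{2d+1}$, combined with $p_n\le C\,p_{n+1}$, is exactly the paper's step $\mathbb{E}\xi^2(X_1\cdots X_n a)\le(2d+1)\,\mathbb{E}\xi^2(X_1\cdots X_nX_{n+1})$ written pointwise instead of probabilistically, and the homomorphism/unambiguity remark matches the paper's. A pleasant side effect of your formulation is that the pointwise comparison $w(ga^{-1})\le(2d+1)C\,w(g)$ is precisely the generator case of Lemma \ref{bounds}, which the paper instead deduces afterwards by applying the operator bound to point masses.
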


\begin{proof}
We write $\sum_{g \in G}  \xi^2(g) \rho^{*n}(g)
= \mathbb{E}\xi^2(X_1X_2\cdots X_n)$, where $X_1,X_2,\dots$ is an i.i.d. sequence
of random variables with distribution $\rho$. Note that
$$
\mathbb{E}\xi^2(X_1X_2\cdots X_n a) \le (2d + 1) \, \mathbb{E}\xi^2(X_1X_2\cdots X_nX_{n+1}),
$$
hence by (\ref{norm}),
\begin{align*}
\|S_a\xi\|^2 & = \sum_{g \in G}  (S_a \xi)^2(g) w(g)\\
& = \sum_{n=1}^\infty  \sum_{g \in G}   p_n (S_a \xi)^2(g) \rho^{*n}(g)\\
& = \sum_{n=1}^\infty  p_n  \mathbb{E}\xi^2(X_1X_2\cdots X_n a)\\
& \le  (2d + 1) \sum_{n=1}^\infty  p_n \mathbb{E}\xi^2(X_1X_2\cdots X_n X_{n+1})\\
& \le  (2d + 1) \sum_{n=1}^\infty  \frac{p_n}{p_{n+1}} p_{n+1} \mathbb{E}\xi^2(X_1X_2\cdots X_n X_{n+1})\\
& \le  (2d + 1) C \sum_{n=1}^\infty  p_n \mathbb{E}\xi^2(X_1X_2\cdots X_n X_{n+1})\\
& = (2d + 1) C \|\xi\|^2.
\end{align*}
Thus $\|S_a\| \le \sqrt{(2d + 1)C}$ as required.
Finally, the way the operators $S_a$ are defined for $a \in \mathfrak{a}$ ensures that
the map $g \mapsto S_g$ is unambiguously defined and is a representation of $G$.
\end{proof}

\begin{lem}\label{bounds}
For each $b \in G$ there is a positive constant $M_b$ such that
$$
\frac{1}{M_b} w(g) \le w(gb) \le M_b w(g),
$$
for every $g \in G$.
\end{lem}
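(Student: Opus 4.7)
The plan is to reduce the estimate to the ``one generator'' case by writing $b$ as a word in $\mathfrak{a}$, and then to exploit the semigroup property of convolution together with the geometric control on the weights $p_n$.

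Fix $b\in G$ and write $b = c_1 c_2 \cdots c_k$ with each $c_i \in \mathfrak{a}$, where $k = |b|_{\mathfrak{a}}$ is the word length of $b$ with respect to the generating set $\mathfrak{a}$. The first key observation is the trivial lower bound
$$
\rho^{*k}(b) \;\ge\; \prod_{i=1}^{k} \rho(c_i) \;=\; \frac{1}{(2d+1)^k},
$$
obtained by forcing the random walk of length $k$ to follow the specific word $c_1\cdots c_k$. Combining this with the identity $\rho^{*(n+k)}(gb) = \sum_{h\in G}\rho^{*n}(h)\rho^{*k}(h^{-1}gb)$ and selecting just the term $h = g$, I get the pointwise inequality
$$
\rho^{*(n+k)}(gb) \;\ge\; \rho^{*n}(g)\,\rho^{*k}(b) \;\ge\; \frac{\rho^{*n}(g)}{(2d+1)^k},
$$
valid for every $g \in G$ and every $n \ge 1$.

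Next I would feed this into the definition of $w$ and use the hypothesis $p_n/p_{n+1}\le C$, which, by iteration, gives $p_{n+k} \ge C^{-k} p_n$. Truncating the series at $m\ge k+1$ and re-indexing $m = n+k$ yields
\begin{align*}
w(gb) &= \sum_{m=1}^{\infty} p_m \rho^{*m}(gb) \;\ge\; \sum_{n=1}^{\infty} p_{n+k}\, \rho^{*(n+k)}(gb)\\
&\ge\; \frac{1}{(2d+1)^k}\sum_{n=1}^{\infty} \frac{p_n}{C^k}\,\rho^{*n}(g) \;=\; \frac{1}{\bigl(C(2d+1)\bigr)^{k}}\,w(g).
\end{align*}
This establishes the lower bound with $M_b^{-1} = \bigl(C(2d+1)\bigr)^{-k}$.

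For the matching upper bound I would simply apply the inequality just proved with $g$ replaced by $gb$ and $b$ replaced by $b^{-1}$, noting that $|b^{-1}|_\mathfrak{a} = |b|_\mathfrak{a} = k$ because $\mathfrak{a}$ is symmetric. This gives
$$
w(g) \;=\; w\bigl((gb)b^{-1}\bigr) \;\ge\; \frac{1}{\bigl(C(2d+1)\bigr)^{k}}\, w(gb),
$$
hence $w(gb) \le \bigl(C(2d+1)\bigr)^{k} w(g)$. Setting $M_b := \bigl(C(2d+1)\bigr)^{|b|_{\mathfrak{a}}}$ finishes the proof. There is no real obstacle here: the only thing to keep an eye on is that the re-indexing is legitimate (which is why we used the lower bound and discarded the finitely many initial terms), and that the constant $C$ from the growth hypothesis enters with exponent equal to the word length $k$, not $n$, so it yields a genuine uniform constant.
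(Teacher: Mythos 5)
Your proof is correct, and the constant you obtain, $M_b=\bigl(C(2d+1)\bigr)^{|b|_{\mathfrak a}}$, is exactly the one the paper gets. The route is packaged differently, though. The paper does not argue directly on $w$: it takes the operator bound $\|S_a\xi\|^2\le (2d+1)C\|\xi\|^2$ already established in Lemma \ref{S}, tests it on the indicator function $\xi_{g_0}$ of a single point (whose norm is just $w(g_0)$), reads off the one-generator inequality $w(g_0a^{-1})\le (2d+1)C\,w(g_0)$, uses the symmetry of $\mathfrak a$ to get the two-sided bound for a single generator, and then iterates along a word representing $b$. You instead bypass Lemma \ref{S} entirely and prove the pointwise convolution inequality $\rho^{*(n+k)}(gb)\ge \rho^{*n}(g)\rho^{*k}(b)\ge (2d+1)^{-k}\rho^{*n}(g)$, then re-index the series defining $w$ using $p_{n+k}\ge C^{-k}p_n$, handling the whole word in one stroke; the upper bound then comes for free from the symmetry $|b^{-1}|_{\mathfrak a}=|b|_{\mathfrak a}$. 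The underlying mechanism is the same in both arguments (the core inequality $\rho^{*n}*\delta_a\le (2d+1)\rho^{*(n+1)}$ together with the ratio hypothesis on the $p_n$), but your version is self-contained and makes the dependence of $M_b$ on the word length explicit in a single computation, whereas the paper's version is shorter because it recycles the norm estimate it has already proved. Both are fine; your re-indexing step (discarding the first $k$ terms of the series, which is legitimate since all terms are nonnegative) is exactly the point that needed care, and you handled it correctly.
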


\begin{proof}
For $a \in \mathfrak{a}$ and $g_0 \in G$, applying the inequality
$$
\|S_a\xi\|^2 \le (2d + 1) C \|\xi\|^2,
$$
obtained in the proof of the previous lemma, to the element $\xi_{g_0} \in l_2(G,w)$,
which is defined by 
$$
\xi_{g_0}(g) = 
\begin{cases}
1 & g =g_0\\
0 & \text{otherwise,}
\end{cases} 
$$
we get
$$
w(g_0a^{-1}) \le (2d+1)C w(g_0).
$$
Writing $g=g_0a^{-1}$ and $M_a = (2d+1)C$ we get
$$
w(g) \le M_a w(ga).
$$
By the symmetry of $\mathfrak{a}$ we get
$$
\frac{1}{M_a} w(g) \le w(ga) \le M_a w(g), \quad \forall g \in G.
$$
Writing $b$ as a word on $\mathfrak{a}$ and iterating we get the required constant $M_b$.
\end{proof}

\subsection{The construction of the measure}

Consider an arbitrary ergodic free $G$-action on a
standard Lebesgue probability space $\Xb = (X,\mathcal{B},\mu,\{T_g\}_{g \in G})$.
Our goal in this section is to construct a $G$-invariant probability measure on $H$
which will provide a linear hypercyclic model for $\Xb$.
%
%
%
%
%

To construct this measure on $H$ we will construct a mapping from $X$ to $H$ and for this we will need 
an ergodic theorem for certain Markov operators. Such theorems were first established by 
V. Oseledec in 1965 \cite{Os}. We will use a version from a paper of Jones-Rosenblatt-Tempelman
\cite[Theorem 3.11]{JRT}. 

Given a symmetric probability measure $\eta$ on $G$ (i.e. $\eta(E^{-1}) = \eta(E)$
for every measurable $E \subset G$)
and a $G$-action $\Xb = (X,\mathcal{B},\mu,\{T_g\}_{g \in G})$,
define the operator $A_\eta : L_p(X,\mu) \to L_p(X,\mu),\ 1\le p \le \infty$ by
$$
(A_\eta f)(x) = \sum_{g \in G} f(T_gx)  \eta(g).
$$
We have
$$
(A_\eta^n f)(x) = \sum_{g \in G} f(T_gx)  \eta^{*n}(g).
$$
We will say that $A_\eta$ is a {\em random walk operator}.
For $1 \le p \le \infty$ let $P_{\mathcal{I}}$ denote the projection 
from $L_p(X,\mathcal{B},\mu)$ onto the subspace $\mathcal{I}$ of
$T$-invariant functions.
A probability measure $\eta$ on a topological group $G$ is
{\em  strictly aperiodic}
if the support of $\eta$, $S(\eta)$, is not contained in a coset of a
proper closed normal subgroup of $G$. We say that the support of $\eta$
{\em generates} $G$ if the smallest closed subgroup
containing $S(\eta)$ is all of $G$.
(In order to avoid confusion we change, in the following theorem, some of the notations 
of the original statement.)

\begin{thm}[J-R-T]\label{JRT}
Let $\eta$ be a symmetric, strictly aperiodic, regular probability measure on $G$. 
Let $\Xb = (X,\mathcal{B},\allowbreak\mu,\{T_g\}_{g \in G})$ be measure preserving $G$-action
on a probability space.
Then $A_\eta^n f$ converges in norm to $P_{\mathcal{I}} f$ for all 
$f \in L_p(X,\mathcal{B},\mu), \ 1 \le p < \infty$, and 
$A_\eta^n f(x)$ converges for a.e. $x$ if $f \in L_p(X,\mathcal{B},\mu), \ 1 < p \le  \infty$.
\end{thm}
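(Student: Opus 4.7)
The plan is to handle mean convergence and a.e.\ convergence separately.

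For mean convergence, I would first record two basic facts: (i) $A_\eta$ is a contraction on every $L_p(X,\mu)$, $1 \le p \le \infty$, by Jensen's inequality applied to the probability distribution $\eta$; and (ii) symmetry of $\eta$ together with $T_g$-invariance of $\mu$ gives self-adjointness of $A_\eta$ on $L_2(X,\mu)$. The identity
\[
\|f\|_2^2 - \langle f, A_\eta f\rangle = \tfrac{1}{2} \sum_{g \in G} \eta(g)\, \|f - f\circ T_g\|_2^2,
\]
valid when $\eta$ is symmetric, then shows that the $+1$-eigenspace of $A_\eta$ equals $\Ical$: $A_\eta f = f$ forces $f = f\circ T_g$ for $\eta$-a.e.\ $g$, hence for every $g$ in the closed subgroup generated by $\supp(\eta)$. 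Strict aperiodicity is precisely what rules out eigenvalue $-1$, since any such eigenvector would force $\supp(\eta)$ into a coset of a proper closed normal subgroup of $G$. With self-adjointness and no spectral mass on $\{-1\}$, the spectral theorem yields $A_\eta^n f \to P_{\Ical} f$ strongly in $L_2$, and a uniform-bound-plus-density argument (bounded functions are dense in $L_p$, and $\|A_\eta^n - P_{\Ical}\|_{L_p \to L_p} \le 2$) extends this to all $L_p$, $1 \le p < \infty$.

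For a.e.\ convergence with $1 < p \le \infty$, the main ingredient is a maximal inequality $\|\sup_n |A_\eta^n f|\|_p \le C_p \|f\|_p$. I would derive it via Rota's dilation theorem: since $A_\eta$ is a positive, self-adjoint $L_2$-contraction, the even iterates $A_\eta^{2n}$ can be realized as conditional expectations $\E(\,\cdot\mid \Fcal_n)$ for a decreasing family of $\sig$-algebras on an auxiliary probability space, and Doob's reverse-martingale maximal inequality then supplies the bound (odd iterates are absorbed by one further application of the $L_p$-contraction $A_\eta$). Almost-everywhere convergence holds trivially on the dense subspace $\Ical + (I - A_\eta)L_\infty$ (the coboundary part telescopes and shrinks in sup-norm), and the standard Banach principle upgrades this to a.e.\ convergence on all of $L_p$.

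The main obstacle is the Rota dilation step itself: for a random walk driven by a general symmetric probability measure on a possibly noncommutative group, carefully constructing the dilation space and verifying the conditional-expectation representation is the technical heart of the argument. Everything else is soft once the maximal inequality is in hand, and the failure of a.e.\ convergence at $p = 1$ is the expected limitation of Markovian ergodic theorems and cannot be patched at this level of generality.
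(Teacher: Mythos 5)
You should first note that the paper itself offers no proof of this statement: it is quoted verbatim (up to notation) from Jones--Rosenblatt--Tempelman \cite[Theorem 3.11]{JRT}, going back to Oseledec \cite{Os}, so your sketch can only be compared with the literature. There your skeleton is the standard one: contraction on all $L_p$, self-adjointness from symmetry of $\eta$, spectral calculus for the mean theorem, and a Rota dilation plus reverse-martingale maximal inequality for the pointwise theorem; your treatment of the odd powers inside the maximal inequality (dominating $\sup_n|A_\eta^{2n+1}f|$ by $A_\eta$ applied to the even-power maximal function, using positivity and the $L_p$-contraction property) is correct. One hypothesis point: to identify the fixed space of $A_\eta$ with $\Ical$, and to turn a putative eigenvalue $-1$ into a coset of a proper closed \emph{normal} subgroup of $G$, you need the support of $\eta$ to generate $G$ (adaptedness) -- this is why the paper defines ``generates'' immediately before the statement; strict aperiodicity alone, as literally written, does not give your identification of the limit as $P_{\Ical}f$.

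The genuine gap is the sentence claiming that a.e.\ convergence ``holds trivially'' on $\Ical + (I-A_\eta)L_\infty$ because ``the coboundary part telescopes and shrinks in sup-norm.'' Telescoping is a Ces\`aro-average phenomenon; for powers one only has $A_\eta^n(I-A_\eta)u = A_\eta^n u - A_\eta^{n+1}u$, and there is no reason this difference tends to $0$ in sup-norm. It cannot be trivial, since without strict aperiodicity it is false (if $A_\eta u=-u$ then $A_\eta^n(I-A_\eta)u=2(-1)^nu$), so any correct argument must again see the spectrum near $-1$; and even when $-1$ is not an eigenvalue, spectral mass accumulating at $-1$ makes $\|A_\eta^n(I-A_\eta)u\|_2$ decay arbitrarily slowly, in particular not square-summably, so no cheap pointwise conclusion is available for your chosen subspace. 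Two standard repairs: (i) replace the subspace by $\Ical+(I-A_\eta^2)L_2$ and use the elementary bound $\sup_{|\la|\le 1}\la^{2n}(1-\la^2)^2=O(n^{-2})$, which gives $\sum_n\|A_\eta^n(I-A_\eta^2)u\|_2^2<\infty$ and hence a.e.\ convergence to $0$ on the coboundary part (density of this subspace again uses aperiodicity and adaptedness to know the fixed space of $A_\eta^2$ is $\Ical$); or (ii) use Rota's theorem for what it actually delivers -- a.e.\ convergence, not merely a maximal inequality, of the even powers $A_\eta^{2n}f$ for every $f\in L_p$, $p>1$ -- identify the pointwise limit with the norm limit via domination by the maximal function, and then note that the odd powers $A_\eta^{2n}(A_\eta f)$ converge a.e.\ to the same limit because the fixed space of $A_\eta^2$ coincides with $\Ical$ by strict aperiodicity. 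With either repair, and with the adaptedness hypothesis made explicit, your argument becomes a correct proof along the lines of the cited source.
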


%

We can now prove our theorem for a finitely generated $G$.

\begin{thm}\label{FG}
For every ergodic probability measure preserving free $G$-action
$\Xb = (X,\mathcal{B},\allowbreak\mu,\{T_g\}_{g \in G})$, there exists a $\{S_g\}_{g \in G}$-invariant probability measure $\nu$ on $H =l_2(G,w)$ which is positive on every nonempty open subset of $H$ and such that the $G$-actions $\Xb$ and $(H,Borel(H),\nu,\{S_g\}_{g \in G})$ are isomorphic.
\end{thm}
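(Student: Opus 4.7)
The plan is to construct, for a carefully chosen function $f : X \to \R$, the equivariant map
$$
\Phi_f : X \to \R^G, \qquad \Phi_f(x)(g) = f(T_g x),
$$
and then take $\nu := (\Phi_f)_* \mu$. The intertwining $\Phi_f(T_a x) = S_a \Phi_f(x)$ is immediate from the definition of $S_a$ in Lemma~\ref{S}, so once $\Phi_f$ lands in $H$ a.s., $\nu$ will automatically be $G$-invariant. It then suffices to show: (a) $\|\Phi_f(x)\| < \infty$ a.s.; (b) $\Phi_f$ is essentially injective, so that $(X,\mu)$ and $(H,\nu)$ become isomorphic $G$-systems; (c) $\nu$ charges every non-empty open subset of $H$.

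For (a), switching the order of summation in \eqref{norm} yields
$$
\|\Phi_f(x)\|^2 = \sum_{g \in G} f(T_g x)^2 w(g) = \sum_{n=1}^\infty p_n (A_\rho^n f^2)(x).
$$
The measure $\rho$ defined in \eqref{rho} is symmetric by construction and strictly aperiodic: its support contains $e$ and generates $G$, so it cannot lie in a coset of a proper closed normal subgroup. Hence, for bounded $f$, Theorem~\ref{JRT} gives $(A_\rho^n f^2)(x) \to \int f^2\, d\mu$ for a.e.\ $x$, so the sequence is bounded almost surely and, since $\sum_n p_n = 1$, the series converges a.s.

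For (b) and (c), I would build $f$ by an inductive procedure driven by a countable dense family $\{\xi^{(k)}\}_{k\ge 1}$ of finitely supported vectors in $H$, with supports $F_k \subset G$ finite. At stage $k$, freeness of the action together with a Rokhlin-type lemma produces a measurable set $B_k \subset X$ of positive measure on which the translates $\{T_g B_k\}_{g \in F_k}$ are pairwise disjoint; on these translates one prescribes $f(T_g x) \approx \xi^{(k)}(g)$ while keeping a uniform $L^\infty$ bound. The tail $\sum_{g \notin F_k} f(T_g x)^2 w(g)$ for $x \in B_k$ is then controlled on a subset of $B_k$ of nearly full relative measure, using the quasi-invariance of $w$ from Lemma~\ref{bounds} and the pointwise convergence in Theorem~\ref{JRT}; this forces $\nu$ to charge every neighbourhood of each $\xi^{(k)}$, hence every non-empty open subset of $H$. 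In parallel, injectivity (b) is arranged by ensuring that $f$ generates the $G$-action---there is ample freedom to do so because the values of $f$ away from the finitely many Rokhlin towers built by stage $k$ remain adjustable.

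The principal obstacle is (c): producing an $f$ for which the pushforward measure has full support on the infinite-dimensional space $H$. This requires simultaneous control of the finite ``head'' $(f(T_g x))_{g \in F_k}$, which must match a prescribed pattern, and the ``tail'' $(f(T_g x))_{g \notin F_k}$, which must be small in the weighted norm. The particular design of $w$---its averaging over random-walk iterates and the condition $p_n/p_{n+1} \le C$---is precisely what supplies the necessary decay and quasi-invariance, while the Jones--Rosenblatt--Tempelman theorem transforms the qualitative convergence $A_\rho^n f^2 \to \int f^2\, d\mu$ into a quantitative a.e.\ estimate on the tail. Once such an $f$ is produced, the equivariance of $\Phi_f$ together with Lemma~\ref{S} delivers the required isomorphism.
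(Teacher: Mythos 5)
Your skeleton is the same as the paper's (the evaluation map $\phi_f(x)=\{f(T_gx)\}_{g\in G}$, the pushforward measure, Theorem~\ref{JRT} for a.e.\ finiteness of the weighted norm, and an induction driven by a basis of balls around finitely supported vectors together with Rokhlin-type towers coming from freeness), but two of your steps have genuine gaps. First, the ``uniform $L^\infty$ bound'' is incompatible with full support: if $\nu$ charges every ball $B_1(\xi)$ with $\xi(e)=M$, then $(f(x)-M)^2w(e)<1$ on a set of positive measure, so the limiting $f$ is necessarily essentially unbounded, and your step (a), argued ``for bounded $f$'', does not apply to the function you actually construct. The paper avoids this by maintaining $\|f_n\|_4<1$ throughout the induction and applying Theorem~\ref{JRT} to $f^2\in L_2(X,\mu)$ (pointwise convergence requires $p>1$, which is exactly why the $L_4$ control of $f$ is imposed). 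Incidentally, the tail estimate inside the induction is simpler than you suggest: the intermediate functions take finitely many values, so $\sum_{g\notin B_N}\hat f_n^2(T_gx)w(g)\le\|\hat f_n\|_\infty^2\,w(G\setminus B_N)$ is small uniformly in $x$; no appeal to Lemma~\ref{bounds} or to JRT is needed there.

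Second, and more seriously, the isomorphism step is asserted rather than proved. ``Ensure that $f$ generates --- there is ample freedom'' does not confront the actual difficulty: the full-support steps repeatedly erase and overwrite $f$ on towers, so any generation property arranged at stage $k$ is damaged at later stages, while conversely the adjustments made for generation perturb the vectors already placed inside the balls $U_i$. The paper supplies a concrete mechanism reconciling the two demands: fix a dense sequence $(A_i)$ in the measure algebra with every set repeated infinitely often; at stage $n+1$ split each value of the current finitely-valued function into two nearby values coding membership in $A_{n+1}$, keeping, for every earlier $i$, the two value-sets $\epsilon_i$-separated inside nested neighborhoods $\hat V^{(n)}_{i,j}$, and choose each perturbation size $\eta$ small compared with all earlier $\epsilon_i,\beta_i,\gamma_i,\delta_i$. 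In the limit the essential range splits into disjoint Cantor sets $C_{i,0},C_{i,1}$ with $\mu\bigl(f^{-1}(C_{i,0})\,\triangle\,A_i\bigr)<\gamma_i$, and since each set recurs with $\gamma_i\to0$ one gets $f^{-1}(C_{i,0})=A_i$ mod $0$, i.e.\ $\phi_f$ separates a dense family of sets and is an isomorphism, while the nested positive-measure sets $E^{(n)}_i$ keep $\nu_f(U_i)>0$. Without this (or some equivalent) bookkeeping, your outline identifies what must be achieved but not how the infinitely many competing requirements survive the induction, which is the heart of the proof.
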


\begin{proof}
1.\
{\bf The Markov operator associated with $\rho$}:
With $\rho$ as in (\ref{rho}),
define the operator $A_\rho  : L_1(X,\mu) \to L_1(X,\mu)$ by
$$
(A_\rho f)(x) = \sum_{g \in G} f(T_gx)  \rho(g).
$$
Then $A_\rho$ is a random walk operator and we have
$$
(A_\rho^n f)(x) = \sum_{g \in G} f(T_gx)  \rho^{*n}(g).
$$

\br

2.\
{\bf The factor map $\phi_f$}:
Given $f \in L_4(X,\mu)$, the series
$$
 \sum_{g \in G} f^2(T_gx)w(g)
$$
converges a.e. on $X$.
In fact, by Theorem \ref{JRT}, we have
\begin{align*}
\sum_{g \in G} f^2(T_gx)w(g) & = \sum_{g \in G} f^2(T_gx) \sum_{n=1}^\infty p_n \rho^{*n}(g)\\
& = \sum_{n=1}^\infty p_n \sum_{g \in G} f^2(T_gx)  \rho^{*n}(g)\\
& =  \sum_{n=1}^\infty p_n (A_{\rho}^n f^2)(x) < \infty .
\end{align*}

Thus the map $x \mapsto \{f(T_gx)\}_{g \in G} \in \R^G$ defines a map $\phi_f : X \to H$,
\begin{equation*}
\phi_f(x) = \{f(T_gx)\}_{g \in G}.
\end{equation*}
Note that $\mu$-a.e. $f(x) = \phi_f(x)(e)$.
We denote the corresponding push forward measure by
$$
\nu_{f} = (\phi_f)_*(\mu).
$$
Moreover, we have
$$
\phi_f(T_hx) = \{f(T_gT_hx)\}_{g \in G} =  \{f(T_{gh}x)\}_{g \in G} = S_h  \{f(T_gx)\}_{g \in G} = S_h\phi_f(x),
$$
so that $\phi_f : \Xb \to (H,Borel(H),\nu,\{S_g\}_{g \in G})$ is a factor map.
Our proof will be complete when we show that there exists a function $f \in L_4(X,\mu)$ for which
(i) the map $\phi_f$ is an isomorphism of dynamical systems and (ii) the support of the measure
$\nu_f$ is all of $H$; i.e. $\nu_f(U) > 0$ for every nonempty open subset of $H$.

\br

3.\
{\bf The induction scheme for the construction of $f$}:
We first choose a sequence of measurable sets $\{A_n\}$ which are dense in the measure algebra
$(\mathcal{B},\mu)$, and such that each element repeats infinitely often.
Next we pick a sequence of balls $\{U_n\}$ which form a basis for the topology of
$H = l_2(G,w)$. More specifically, $U_n = B_{r_n}(\xi_n)$ is the open ball of radius $r_n > 0$
centered at $\xi_n$, where $\xi_n \in H_{fin}$, the dense subspace of $H$ consisting of functions
of finite support.
We will construct inductively a sequence of finite valued functions
$\{f_n\}_{n=0}^\infty$, with $f_0 = 0$, which will converge in $L_2(X,\mu)$ to the desired function $f$.
We will also define auxiliary sequences of positive reals $\ep_n$, $\ga_n$
and $\del_n$ decreasing to $0$.

We now suppose that by stage $n$ we already have the following structure:
\begin{enumerate}

\item[$1_n$]
A function $f_n$, admitting finitely many values:
to each $i \le n$ there are two finite subsets $(V^{(n)}_{i,0}, V^{(n)}_{i,1})$ of $\R$  which are
at least $\ep_i(1 +\frac1n)$ apart, and 
the range of $f_n \ =\, \cup_{i \le n} \{ V^{(n)}_{i,0} \cup V^{(n)}_{i,1}\}$.
\item[$2_n$]
There is a number $\beta_n$ so that for each $i \le n$ the sets
$(\hat{V}^{(n)}_{i,0}, \hat{V}^{(n)}_{i,1})$, which are obtained by covering each point in the sets 
$(V^{(n)}_{i,0}, V^{(n)}_{i,1})$ with intervals of length $\beta_n$, are $\ep_i(1+\frac{1}{n+1})$ apart,
and so that for each $i < n$
and $j \in \{0,1\}$, $\hat{V}^{(n)}_{i,j} \subset \hat{V}^{(n-1)}_{i,j}$.
\item[$3_n$]
For every $i \le n$ and for
all $x \in X$, $x \in A_i$ implies $f_n(x) \in V^{(n)}_{i,0}$ and $x \in X \setminus A_i$ implies
$f_n(x) \in V^{(n)}_{i,1}$, with the exception of sets of measure $ < \ga_i(1 -\frac1n)$.
\item[$4_n$]
On a set $E^{(n)}_i$ of measure $\ge \del_i(1 + \frac1n)$, $\phi_{f_n}$ takes values in $U_i$ for $i \le n$,
with $E^{(j+1)}_i \subset E^{(j)}_i$ for $j \le n -1$.
\item[$5_n$]
$$
\|f_n\|_4  = \left(\int_X |f_n|^4 \,d\mu \right)^\frac14 < 1.
$$
\end{enumerate}

\br

4.\
{\bf Step $n+1$}:

Next find a $\eta>0$ so small that if we change $f_n$
by less than $\eta$ on a set of measure at least $1 -\eta$ to obtain $f_{n+1}$, then we will
still satisfy the conditions $1_{n+1}$ to $5_{n+1}$ for $i \le n$ with $f_{n+1}$.
Thus $\eta$ has to be less than 
$\min \{\frac{1}{2n(n+1)}\ep_i, \frac{1}{2n(n+1)}\beta_i,  \frac{1}{2n(n+1)}\del_i, \frac{1}{2n(n+1)}\ga_i \}$
for all $i \le n$.

\br

(a) {\em Hitting $U_{n+1}$ with positive probability}:
Recall that $U_{n+1} = B_{r_{n+1}}(\xi_{n+1}) \subset H$ where
$\xi_{n+1} \in H_{fin}$ is supported on a ball, say $B_{N_0}$ in $G$
(this is the collection of elements in $G$
which have a representation as a word on the alphabet $\mathfrak{a}$ of length $\le N_0$).

As $G$ acts freely on $X$ we can find $N >> N_0$ (we will determine soon how large 
$N$ needs to be) and a set $E$ of positive measure such that the
sets $\{gE\}_{g \in B_{N}}$ are disjoint and
$\mu(B_{N}E) < \frac12 \eta$ (this number determines the new $\del_{n+1}$).
For a proof of this fact see e.g. \cite[Lemma 3.1]{W}.
We erase $f_n$ on $B_{N}E$ and replace it there by a function which (i) matches the function
$\xi_{n+1}$ on $B_{N_0}E$ and (ii) is zero on the set $(B_N \setminus B_{N_0})E$.
This new function we denote by $\hat f_n$. We only have to choose $N$ sufficiently large so that a.e.
the contribution of ${\hat f}_n^2(gx)$ with $g \not\in B_N$ to the $l_2$-norm of $\phi_{f_n}$ in
$H=l_2(G,w)$ is $< \frac12 r_{n+1}$ (recall that $\hat f_n$ admits only finitely many values).

%
%

\br

(b) {\em Distinguishing $A_{n+1}$ from $X \setminus A_{n+1}$}:
Let $(\hat V^{(n)}_{i,0}, \hat V^{(n)}_{i,1})$ be closed neighborhoods of the sets
$(V^{(n)}_{i,0}, V^{(n)}_{i,1})$, obtained by covering each point with an interval
of length $ < \beta_n$, which are $\ep_i(1 + \frac{1}{n+1})$ apart.
We consider a set of constancy, say $D \subset X$, of the function $\hat{f}_n$,
where it takes the value $u \in \R$. 
We split $u$ to two values $u_0$ and $u_1$, staying within the
interiors of the corresponding sets $(\hat{V}^{(n)}_{i,0}, \hat{V}^{(n)}_{i,1})$, to distinguish between
$A_{n+1}$ and $X \setminus A_{n+1}$. We do this for all the values of $\hat{f}_n$
to define 
the new sets $(V^{(n+1)}_{n+1,0}, V^{(n+1)}_{n+1,1})$ and
a new $\ep_{n+1}$ (all the points are distinct). This also defines the new sets
$(V^{(n+1)}_{i,0}, V^{(n+1)}_{i,1})$ for $i \le n$ and the function $f_{n+1}$.
Finally we choose a number $\beta_{n+1}$ so that for each $i \le n+1$ the sets
$(\hat{V}^{(n+1)}_{i,0}, \hat{V}^{(n+1)}_{i,1})$, which are obtained by covering each point in the sets 
$(V^{(n)}_{i,0}, V^{(n)}_{i,1})$ with intervals of length $\beta_{n+1}$, are $\ep_i(1+\frac{1}{n+2})$ apart,
and so that for each $i < n +1$
and $j \in \{0,1\}$, $\hat{V}^{(n+1)}_{i,j} \subset \hat{V}^{(n)}_{i,j}$.

\br

5. {\bf Conclusion}:
The way the sequence $f_n$ was constructed ensures the existence in $L_2(X,\mu)$
of the limit function $f := \lim_{n \to \infty} f_n$.
The essential range of the function $f$
lies in the countable union of Cantor sets
$$
E = \bigcup_{i=1}^\infty C_{i,0} \cup C_{i,1},
$$
where
$$
C_{i,0} =
\bigcap_{n=i}^\infty \hat V^{(n)}_{i,0}
\ \ \text{and} \ \ 
C_{i,1} =
\bigcap_{n=i}^\infty \hat V^{(n)}_{i,1},
$$
and
$$
C_{i,0} \cap C_{i,0} =\emptyset.
$$
It is now easy to check that
the properties $3_n$ imply that the map $\phi_f$ is an isomorphism.
E.g. we have, for each $i$
$$
\mu(f^{-1}(C_{i,0}) \tri A_i) < \ga_i,
$$
and, as each $A = A_i$ appears infinitely often, we conclude that, in fact
$$
f^{-1}(C_{i,0}) = A \pmod 0.
$$
Similarly the properties $4_n$ imply that $\supp(\nu_f)=H$.
This completes the proof of Theorem \ref{FG}.
\end{proof}

\section{The general countable group case}

Our goal in this section is to prove Theorem \ref{main}. We do this by applying Theoren \ref{FG}
to an ambient finitely generated group which is provided by
the following well known result of Higman, Neumann and Neumann \cite{HNN}:

\begin{thm}
Any countable group $G$ can be embedded in a group $H$ generated by two elements.
If the number of defining relations for $G$ is $n$, the number of defining relations for $H$ can
be taken to be $n$.
\end{thm}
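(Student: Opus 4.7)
The plan is to build $H$ as an iterated HNN extension of a free product. Enumerate $G = \{g_1, g_2, \ldots\}$ and consider the free product $K = G * F$, where $F = \langle a, b\rangle$ is free of rank two. First I would exhibit two free subgroups of $K$ of countably infinite rank: the subgroup $A$ generated by $\alpha_n := b^n a b^{-n}$ for $n \geq 1$, which is free because these elements already freely generate a free subgroup of $F$; and the subgroup $B$ generated by $\beta_n := g_n \alpha_n$ for $n \geq 1$. Freeness of $B$ follows from a normal-form analysis in $K$: a non-trivial reduced word in the $\beta_n^{\pm 1}$ never admits cancellation, because when written out in $K$ its $g_n$-letters and its $\alpha_n^{\pm 1}$-letters lie in alternating free factors and hence cannot collapse.

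Having these two free subgroups together with the bijection $\alpha_n \leftrightarrow \beta_n$ extending to an isomorphism $\phi : A \to B$, I would form the HNN extension
$$
K' \;=\; \langle\, K,\, t \,\mid\, t^{-1} \alpha_n t = \beta_n,\ n \geq 1 \,\rangle,
$$
so that $G$ embeds faithfully into $K'$ by Britton's lemma. The defining relations give $g_n = (t^{-1}\alpha_n t)\,\alpha_n^{-1}$, so every $g_n$ is a word in $\{a, b, t\}$; hence $K'$ itself is $3$-generated and still contains $G$.

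To reduce from three generators to two I would perform one further HNN construction. Inside $K' * \langle c\rangle$ one can identify a free infinite-rank subgroup generated by iterated conjugates of $a$ by $c$ with another free subgroup that encodes $b$ and $t$, chosen so that $b$ and $t$ become expressible as words in $\{a, c\}$ together with a single new stable letter; a last amalgamation then absorbs this letter, yielding a $2$-generator group $H \supseteq G$. For the relation count, the key point is that an HNN extension along free subgroups introduces no new relations in the base group, so after the generators of $G$ are rewritten as words in the two generators of $H$, the defining relations of $G$ survive intact and no others are needed.

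The main obstacle I foresee is the rigorous verification that $B$ is free inside $K$, which requires a careful Britton-style normal-form calculus in free products rather than a cursory check; once this freeness is in hand, the remaining steps are essentially formal consequences of the standard theory of HNN extensions.
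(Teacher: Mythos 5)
First, a point of reference: the paper does not prove this statement at all --- it is quoted as a known theorem from Higman--Neumann--Neumann \cite{HNN} --- so your proposal has to be judged against the classical argument rather than anything in the text.

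Your first stage is essentially that classical argument and is fine. The family $\alpha_n=b^nab^{-n}$ is a free basis, and the freeness of $B=\langle \beta_n=g_n\alpha_n\rangle$ is in fact easier than you fear: the retraction $G*F\to F$ that kills $G$ sends $\beta_n\mapsto\alpha_n$, so any nontrivial reduced word in the $\beta_n$ has nontrivial image, and no Britton-style calculus is needed. One wording in your relation count should be corrected: an HNN extension certainly \emph{does} add relations (the relations $t^{-1}\alpha_nt=\beta_n$); the point is that if you run the construction over a generating set of $G$ taken from its given presentation (not over an enumeration of all elements), each added relation is a definition of one of those generators, so Tietze transformations eliminate these relations together with the generators and leave exactly the $n$ rewritten defining relations of $G$.

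The genuine gap is the reduction from three generators to two. As written, the second stage is not a construction: adjoining a free letter $c$ and then a new stable letter $s$ leaves you with the three generators $a,c,s$, and ``a last amalgamation then absorbs this letter'' is not an operation that exists --- amalgamated products and HNN extensions only add generators, never absorb them. A generator disappears only if some defining relation expresses it in terms of the others, and you have not arranged any such relation; iterating ``add a free letter plus a stable letter'' keeps you at three generators forever. Nor is the fix a trivial symmetrization of your stage one: for instance $\{b\}\cup\{b^nab^{-n}:n\ge1\}$ is \emph{not} a free set (note $b\,(bab^{-1})\,b^{-1}=b^2ab^{-2}$), so you cannot simply throw $b$ into your family $B$. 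The classical proof avoids a second stage altogether by choosing the two families asymmetrically: take $A$ free on $a,\ b^{-1}ab,\ b^{-2}ab^2,\dots$ and $B$ free on $b,\ x_1a^{-1}ba,\ x_2a^{-2}ba^2,\dots$ (where $x_1,x_2,\dots$ generate $G$; freeness again via the retraction), and form the single HNN extension with $t^{-1}at=b$ and $t^{-1}b^{-n}ab^nt=x_na^{-n}ba^n$. The first relation gives $b\in\langle a,t\rangle$, the others then give every $x_n\in\langle a,t\rangle$, so the extension is generated by the two elements $a,t$, contains $G$, and after the Tietze eliminations has exactly the defining relations of $G$. You should either adopt this asymmetric choice or supply an explicit, verified three-to-two step; as it stands the proposal does not prove the theorem.
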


\begin{proof}[Proof of Theorem \ref{main}]
Let $G < H$ be a HNN extension of $G$ with $H = \langle a_1^{\pm 1}, a_2^{\pm1} \rangle$.
Construct the symmetric probability measure $\{w_H(h) : h \in H\}$ as in the proof of Theorem \ref{FG}.
By Lemma \ref{bounds} there is, for every $b \in H$, a constant $M_b >  0$ with
$$
\frac1M_b w_H(h) \le w_H(hb) \le M_b w_H(h),
$$
for all $h \in H$.
We restrict $w_H$ to $G$ and renormalize to obtain a probability measure
$\rho$ on $G$. Thus, for every $g \in G$, 
$\rho(g) = \frac1K w_H(g)$, where $K = \sum_{k \in G}w_H(k)$.
We still have that for every $b \in G$
\begin{equation}\label{nrho}
\frac1M_b \rho(g) \le \rho(gb) \le M_b \rho(g), \quad \forall g \in G.
\end{equation}
Next we use the symmetric probability measure $\rho$ --- which is of course no longer finitely
supported; in fact, it is strictly positive everywhere on $G$ --- to create a new
probability measure $w = w_G$ on $G$, namely $w(g) = \sum_{n=1}^\infty p_n \rho^{*n}(g)$
for every $g \in G$.

Now in the Hilbert space $l_2(G,w)$, by an argument similar to the one used in the proof of
Lemma \ref{S}, and using (\ref{nrho}), we have for each $g_0 \in G$ and every $\xi \in l_2(G,w)$:
\begin{align*}
\|S_{g_0}\xi\|^2 & = \sum_{g \in G}  (S_{g_0} \xi)^2(g) w(g)\\
& = \sum_{n=1}^\infty \sum_{g_1,g_2,\dots,g_n} p_n \xi^2(g_1g_2\cdots g_ng_0)
\rho(g_1)\rho(g_2)\cdots \rho(g_n)\\
& \le
M_{g_0}  \sum_{n=1}^\infty  \sum_{g_1,g_2,\dots,g_n} p_n \xi^2(g_1g_2\cdots g_{n-1}(g_ng_0))
\rho(g_1)\rho(g_2)\cdots \rho(g_{n-1})\rho(g_ng_0)\\
& = M_{g_0}  \sum_{n=1}^\infty \sum_{g_1,g_2,\dots,g_n} p_n \xi^2(g_1g_2\cdots g_n)
\rho(g_1)\rho(g_2)\cdots \rho(g_n).
\end{align*}
Thus $\|S_{g_0}\| \le  M_{g_0}$ and the map $g \mapsto S_{g}$ is the required
representation of $G$ on $l_2(G,w)$. The rest of the proof follows mutatis mutandis that of
Theorem \ref{FG}.
\end{proof}

\section{The compactly generated group case}

In this section we deal with topological groups which are not
necessarily discrete. Our goal is to obtain an analogue of Theorem \ref{FG}
for a topological group which is compactly generated, locally compact
and second countable. We will concentrate on pointing out the technical
results that are needed to carry out the proof that we gave in detail for
countable groups. The first lemma concerns the group itself equipped with 
a Haar measure.  
%

\begin{lem}\label{min}
Let $G$ be a locally compact group and $K$ a compact symmetric neighborhood
of the identity. Let $\la$ be a fixed right-invariant Haar measure on $G$.
For any compact symmetric set $L \subset G$ such that $K$ lies in the interior of $L$
(for example $K^2$) if $m$ is Haar measure restricted to $L$ 
(i.e. $m = \ch_L \la$)
then the density of the measure $m*m$ restricted to $LK$ is bounded away from zero. 
\end{lem}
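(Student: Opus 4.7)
The plan is to identify the Radon-Nikodym derivative $h := d(m*m)/d\la$ explicitly, observe that $h$ is continuous, and show that $h$ is strictly positive on the compact set $LK$; continuity together with compactness of $LK$ will then give the desired uniform lower bound.

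For the density, Fubini combined with the right-invariance of $\la$ (performing, at fixed $y$, the substitution $z = xy$ in the double integral defining $m*m$) yields
\[
(m*m)(E) = \int_E h(z)\, d\la(z), \qquad h(z) = \la(L \cap Lz),
\]
where $Lz = \{\ell z : \ell \in L\}$. Continuity of $h$ on $G$ is standard: since $\ch_L$ is compactly supported and bounded, one has $|h(z) - h(z_0)| \le \la(Lz \triangle Lz_0) = \la(L(zz_0^{-1}) \triangle L)$, which tends to $0$ as $z \to z_0$ by continuity of right-translation in $L^1(\la)$.

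The heart of the argument is to show $h(z) > 0$ for every $z \in LK$. Since $K$ is compact and $K \subset \inte L$, the tube lemma applied to multiplication $G \times G \to G$ produces a symmetric open neighborhood $U$ of $e$ with $UK \subset \inte L$. Given $z = \ell k \in LK$ with $\ell \in L$ and $k \in K$, each $y = uk$ for $u \in U$ lies in $L$, while $yz^{-1} = u\ell^{-1}$; consequently
\[
L \cap Lz \;\supset\; \{uk : u \in U \text{ and } u\ell^{-1} \in L\},
\]
and by right-invariance its $\la$-measure equals $\la(U \cap L\ell)$. Since $U$ is an open neighborhood of $e$ and $e \in L\ell$ (as $\ell^{-1} \in L$), this measure is strictly positive whenever $\ell \in \inte L$, because then $e \in \inte(L\ell)$. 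For the intended choice $L = K^2$, the interior of $L$ is dense in $L$, so one may readjust the factorization $z = \ell k$ to force $\ell \in \inte L$; once this is achieved, $h(z) > 0$ follows. By continuity of $h$ and compactness of $LK$ we conclude $\inf_{z \in LK} h(z) > 0$.

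The main obstacle is the positivity sub-claim $\la(U \cap L\ell) > 0$: if $\ell$ sits on a thin part of $\partial L$, this might fail. It is circumvented by the freedom in the decomposition $z = \ell k$ (any perturbation $\ell \mapsto \ell'$ in a neighborhood produces a valid factorization with a nearby $k' \in K$, as long as the original $k$ lies in $\inte K$); the regularity condition $L = \overline{\inte L}$, automatic for $L = K^2$ when $K = \overline{\inte K}$, ensures that $\ell$ can be pushed into $\inte L$.
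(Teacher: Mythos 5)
Your overall route is the same as the paper's: identify the density $h(z)=\la(L\cap Lz)$ of $m*m$ (the paper writes it as $\psi(lk)=\int \ch_L(lkh^{-1})\ch_L(h)\,d\la(h)$), note its continuity, and reduce, by compactness of $LK$, to strict positivity at each point $z=\ell k$; your lower bound $h(z)\ge \la(U\cap L\ell)$, with $U$ a symmetric neighborhood of $e$ satisfying $UK\subset \inte L$, is exactly the paper's estimate with $V=U$. Your suspicion about boundary points is also well founded: the paper's assertion that ``$lV$ must intersect the interior of $L$'' is only justified when $\ell \in \overline{\inte L}$, and for a completely arbitrary compact symmetric $L\supset K$ the lemma as literally stated can fail (in $\R$ take $K=[-1,1]$, $L=[-2,2]\cup\{\pm 5\}$; then $m*m$ has density $0$ on $[4,6]\subset LK$). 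So some regularity such as $L=\overline{\inte L}$ --- which, as you note, holds for $L=K^2$ once $K=\overline{\inte K}$ --- is implicitly being used.

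However, your way of handling the boundary case has a genuine hole. You propose to move $\ell$ into $\inte L$ by re-factoring $z=\ell' k'$, and you yourself flag that this needs the original $k$ to lie in $\inte K$; but there are points of $LK$ all of whose factorizations have $\ell\in\partial L$ and $k\in\partial K$, so the re-factorization is simply unavailable. For instance, with $K=[-1,1]$, $L=[-2,2]$ in $\R$, the point $z=3$ has the unique factorization $3=2+1$, and no $\ell'\in\inte L$, $k'\in K$ works; since the compactness argument needs $h>0$ at \emph{every} point of $LK$, such points cannot be skipped. Fortunately, no re-factorization is needed: your bound already suffices, because $\la(U\cap L\ell)>0$ whenever $\ell\in\overline{\inte L}$, not only when $\ell\in \inte L$. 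Indeed $(\inte L)\ell$ is open, is contained in $L\ell$, and its closure $\overline{\inte L}\,\ell$ contains $\ell^{-1}\ell=e$ because $\ell^{-1}\in\overline{\inte L}$ by symmetry; hence $U$ meets $(\inte L)\ell$, so $U\cap L\ell$ contains a nonempty open set and has positive Haar measure. Under $L=\overline{\inte L}$ this covers every $\ell\in L$ (in the example, $h(3)=\la([-2,2]\cap[1,5])=1>0$, consistent with this). With that one-line replacement your argument is complete and coincides with the paper's, made precise exactly at the point the paper leaves implicit.
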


\begin{proof}
Since the density of $m*m$; i.e. the function $\psi$
for which $d(m*m) = \psi(g)\, d\la(g)$, is continuous, and $LK$ is compact it suffices to show that for any
point of the form $lk$ with $l \in L$ and $k \in K$ this density is positive.
Now this density is given by
$$
\psi(lk)  = \int \ch_L(l k h^{-1})\ch_L(h) d\la(h).
$$
For all $v$ in a small symmetric neighborhood of the identity $V$ we have  $vk \in L$, since $K$ is in
the interior of $L$. For $h=vk$ of this form we have $l k h^{-1} = l v^{-1}$ and $lV$ must
intersect the interior of $L$. Thus this intersection has positive Haar measure, from which it follows
that the density must be positive:
\begin{align*}
\psi(lk) & = \int \ch_L(l k h^{-1})\ch_L(h) d\la(h) \\
        & \ge \int \ch_L(l v^{-1})\ch_L(vk) d\la(vk) > 0.
\end{align*}
\end{proof}

We remark that a similar  conclusion is valid also for $\hat{m}$ which is defined by
$\hat{m}(B) = m(B^{-1})$. 

Let now $K$ be a symmetric compact neighborhood of the identity which generates $G$
(i.e. $\cup_{n \in \N} K^n = G$).
Let $L = K^2$ and define  $\rho$ by the following equation:
$$ \rho(B) = \frac{1}{2\la(L)}( \la(L \cap B) + \la(L \cap B^{-1})).$$ 

By definition $\rho = \frac{1}{2}(m + \hat{m})$ is a symmetric measure so that when it 
averages unitary operators the result will be self adjoint. 

Finally let
$$
w = \sum_{n=1}^\infty p_n \rho^{*n},
$$
where, as in Section \ref{S:fg},  $p_1, p_2, \dots $ is a sequence of positive real numbers with
$\sum_{n=1}^\infty p_n =1$ and such that for some $C >0$, $\frac{p_n}{p_{n+1}} \le C$ for every $n$.

Our Hilbert space now is $H = L_2(G,w)$ and the representation is given by
right multiplication: $S_k\xi(g) = \xi(gk)$ for $k \in K$.

We can now formulate the main result of this section:
   
\begin{thm}\label{CG}
Let $G$ be a a topological group which is compactly generated, locally compact
and second countable.
For every ergodic probability measure preserving free $G$-action
$\Xb =   (X,\mathcal{B},\mu, \{T_g\}_{g \in G})$, there exists a $\{S_g\}_{g \in G}$-invariant probability
measure $\nu$ on $H =L_2(G,w)$ which is positive on every nonempty open subset of $H$ and such
that the $G$-actions $\Xb$ and $(H,Borel(H),\nu,\{S_g\}_{g \in G})$ are isomorphic.
\end{thm}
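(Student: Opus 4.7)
The strategy is to follow the proof of Theorem \ref{FG} line by line, replacing finite sums over $G$ by integrals against Haar measure and the combinatorial Rokhlin tower by a free-action tower for lcsc groups. Three ingredients require new work: (i) the boundedness of $S_g$ and comparability of $w$, the continuous analogs of Lemmas \ref{S} and \ref{bounds}; (ii) the applicability of Theorem \ref{JRT} to the measure $\rho$; and (iii) the Rokhlin-type tower used in step 4(a) of the inductive construction.

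For (i), by right-invariance of Haar measure,
$$
\|S_k \xi\|^2 \;=\; \int_G \xi^2(g)\, w(gk^{-1})\, d\lambda(g),
$$
so it suffices to control $w(gk^{-1})/w(g)$ in a suitable integrated sense. The key device is Lemma \ref{min}, which gives $\rho^{*2} \ge c\,\ch_{K^3}\,\lambda$ for some $c>0$. Expanding
$$
\rho^{*(n+2)}(g) \;=\; \int_G \rho^{*n}(gh^{-1})\,\rho^{*2}(h)\,d\lambda(h)
$$
and restricting the inner integral to a neighborhood of $k$ inside $K^3$, an application of Fubini yields an inequality of the form
$$
\int \xi^2(g)\,\rho^{*(n+2)}(g)\,d\lambda(g) \;\ge\; c\int_{K^3} \int \xi^2(g'h)\,\rho^{*n}(g')\,d\lambda(g')\,d\lambda(h).
$$
Combined with the weight ratio $p_n/p_{n+2}\le C^2$, this produces the required operator bound, at least in averaged form over $h\in K^3$; iterating along a word expansion $g=k_1\cdots k_n$ with $k_i\in K$ then yields the lcsc analog of Lemma \ref{bounds}, namely $\|S_g\|\le M_g$ for every $g\in G$. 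This is where I expect the main technical obstacle, since the discrete argument's pointwise density comparison between $\rho(g_n g_0)$ and $\rho(g_n)$ has no literal continuous analog (the density of $\rho$ jumps across $\partial L$); the smoothing supplied by two convolutions of $\rho$ is precisely what rescues the estimate.

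For (ii), the measure $\rho$ is symmetric and regular, its support $K^2$ generates $G$ by hypothesis, and it is strictly aperiodic because its support contains a neighborhood of the identity and hence cannot lie in a coset of a proper closed normal subgroup. Theorem \ref{JRT} therefore applies, giving a.e.\ and norm convergence of $A_\rho^n f$; for $f\in L_4(X,\mu)$ the series $\sum_n p_n A_\rho^n f^2(x)$ then converges $\mu$-a.e. This justifies the definition of the equivariant map $\phi_f\colon X\to L_2(G,w)$ by $\phi_f(x)(g)=f(T_g x)$, with $\phi_f\circ T_h=S_h\circ\phi_f$ exactly as in Section \ref{S:fg}.

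Finally for (iii), the inductive construction of $f_n\to f$ is carried out verbatim; the only genuinely new ingredient is the tower in step 4(a), where one needs a measurable set $E\subset X$ of positive measure whose translates $\{gE:g\in B_N\}$ by a prescribed compact $B_N\subset G$ are essentially disjoint and satisfy $\mu(B_N E)<\tfrac{1}{2}\eta$. Such a tower for free p.m.p.\ actions is supplied by the continuous Rokhlin-type construction of \cite{W}, applicable precisely because the action is free. With this in hand, the modification of $f_n$ on $B_N E$ to match $\xi_{n+1}$ on $B_{N_0}E$, the splitting of values of $\hat f_n$ to distinguish $A_{n+1}$ from its complement, the selection of the refined separation parameters $\ep_{n+1},\beta_{n+1},\del_{n+1},\ga_{n+1}$, and the verification that the limiting function $f$ gives an isomorphism $\phi_f$ with $\supp(\nu_f)=H$ all go through mutatis mutandis as in the proof of Theorem \ref{FG}.
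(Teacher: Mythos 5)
Your overall route is the same as the paper's (a density lower bound from Lemma \ref{min} to dominate convolutions, the J-R-T theorem for $\rho$, and a free-action tower feeding the same inductive construction), but two steps, as you have written them, do not work.

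First, the operator bound in (i). Your Fubini step only yields an estimate on the \emph{average} $\int_{K^3}\bigl(\int \xi^2(g'h)\rho^{*n}(g')\,d\la(g')\bigr)d\la(h)$, and no iteration over word expansions upgrades an average over $h$ to a bound at a fixed $k$: the exceptional set of bad $h$'s depends on $\xi$, so you never conclude that any single $S_k$ is a bounded operator, which is what the theorem needs. Moreover, the pointwise comparison you dismiss as having ``no literal continuous analog'' is in fact available, and it is exactly the paper's Lemma \ref{rho2}: for $k\in K$ one has $m*\del_k=\ch_{Lk}\la$, a measure with density at most $1$ supported in $LK$, while by Lemma \ref{min} the density of $m*m$ is bounded below on $LK$ by some $u>0$; hence $m*\del_k\le u^{-1}\,m*m$ \emph{pointwise}, similarly for $\hat m$, so $\rho*\del_k\le D\,\rho*\rho$ and, convolving on the left with $\rho^{*(n-1)}$, $\rho^{*n}*\del_k\le D\,\rho^{*(n+1)}$. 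With $p_n/p_{n+1}\le C$ this gives $\|S_k\|\le\sqrt{DC}$ uniformly for $k\in K$, and $\|S_g\|\le (DC)^{j/2}$ for $g\in K^j$. The point is that one compares $\rho*\del_k$ with $\rho*\rho$, not a translate of $\rho$ with $\rho$ itself, so the jump of $\ch_L$ across $\partial L$ is irrelevant; only one extra convolution is needed, not two.

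Second, the tower in (iii). A positive-measure set $E$ whose translates $\{gE: g\in B_N\}$ are (even essentially) pairwise disjoint cannot exist when $B_N$ is a compact set of positive Haar measure: each $gE$ has measure $\mu(E)>0$, so already finitely many essentially disjoint translates exceed total measure $1$. The correct continuous substitute, which the paper sets up explicitly, is a \emph{thin base for a $Q$-tower}: a null set $V$ carrying a transverse measure $\theta$ such that $\al(g,v)=T_gv$ is injective on $Q\times V$ and maps $\ch_Q\la\times\theta$ to $\ch_{\al(Q\times V)}\mu$; by freeness such a $V$ exists inside any set of positive measure (\cite[page 54]{OW}). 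Because the base is null, defining $\hat f_n$ through tower coordinates requires an extra argument that the resulting $\phi_{f}$ still charges the balls $U_{n+1}$; the paper handles this via the strong continuity of translation in $L_2(G,w)$, which produces a positive-measure set of points whose orbit functions are close to the prescribed finitely supported $\xi_{n+1}$. Your points (ii) and the remainder of the induction do go through as you say.
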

    
The fact that the shift operators $S_g$ are bounded is shown as follows. 
From Lemma \ref{min} we deduce:

\begin{lem}\label{rho2}
There exists a constant $D > 0$ such that for every $k \in K$, $g \in G$ and $n \in \N$:
\begin{enumerate}
\item
$\rho * \del_k \le D (\rho *\rho)$,
hence also 
$$
\rho^{*n} * \del_k \le D \rho^{*(n+1)}, \quad \forall  n \ge 1.
$$
\item
$
dw(gk) \le DC\, dw(g).
$
\end{enumerate}
\end{lem}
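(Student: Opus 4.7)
The plan is to prove (1) by comparing densities with respect to the right Haar measure $\lambda$ on the compact set $LK$ where all relevant measures are concentrated, and then to propagate (1) to all $n$ by convolution monotonicity. Assertion (2) will then follow by inserting (1) into the definition of $w$ and using the sub-geometric control $p_n/p_{n+1} \le C$ to absorb the resulting index shift.

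For (1), note that $\rho = \tfrac12(m + \hat m)$ is supported on the compact symmetric set $L$ with a bounded density: the density of $m$ is $\tfrac{1}{\lambda(L)}\mathbf{1}_L$, and that of $\hat m$ differs from $\tfrac{1}{\lambda(L)}\mathbf{1}_L$ only by the modular function $\Delta$, which is continuous and hence bounded on $L$. Consequently $\rho * \delta_k$ has density bounded pointwise by some absolute constant $M > 0$, and is supported in $Lk \subseteq LK$. Expanding
\[
\rho * \rho \;=\; \tfrac14\bigl(m*m + m*\hat m + \hat m*m + \hat m*\hat m\bigr)
\]
as a sum of four nonnegative measures and applying Lemma \ref{min} to the first term, one obtains $\delta > 0$ such that the density of $\rho * \rho$ is at least $\delta/4$ throughout the compact set $LK$. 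Taking $D := 4M/\delta$ (independent of $k \in K$) then yields the pointwise measure inequality $\rho * \delta_k \le D \rho^{*2}$. The iterated statement $\rho^{*n} * \delta_k \le D \rho^{*(n+1)}$ is immediate on convolving from the left by the nonnegative measure $\rho^{*(n-1)}$, which preserves the order.

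For (2), summing (1) against the weights and reindexing,
\[
w * \delta_k \;=\; \sum_{n=1}^\infty p_n \, (\rho^{*n} * \delta_k) \;\le\; D \sum_{n=1}^\infty p_n \, \rho^{*(n+1)} \;=\; D \sum_{m=2}^\infty \frac{p_{m-1}}{p_m}\, p_m\, \rho^{*m} \;\le\; DC\, w.
\]
Reading off densities this says $\phi_w(g k^{-1}) \le DC\, \phi_w(g)$; since $K$ is symmetric, the same inequality applies with $k^{-1} \in K$ in place of $k$, giving $\phi_w(gk) \le DC\, \phi_w(g)$, which is (2).

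The main technical obstacle is verifying that the constant $D$ in (1) is truly uniform in $k \in K$: this reduces to the upper bound $M$ on the density of $\rho$ (uniform by compactness of $L$ and continuity of $\Delta$) and to the lower bound $\delta$ on the density of $m*m$ (uniform over $LK$ by continuity of this density, exhibited in the proof of Lemma \ref{min}, together with compactness of $LK = K^3$). Once (1) holds with uniform $D$, assertion (2) is a one-line calculation using only the definition of $w$ and the hypothesis $p_n/p_{n+1} \le C$.
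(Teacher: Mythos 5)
Your proof is correct and takes essentially the same route as the paper: a density comparison on the compact set $LK$ using Lemma \ref{min}, then left-convolution by $\rho^{*(n-1)}$ to get the iterated inequality, and finally the weight-shift estimate $p_n/p_{n+1}\le C$ for part (2). The only (harmless) variation is that you lower-bound $\rho*\rho$ via the $m*m$ term alone and upper-bound the density of $\rho*\delta_k$ uniformly (invoking the modular function for $\hat m$), whereas the paper compares $m*\delta_k$ with $m*m$ and $\hat m*\delta_k$ with $\hat m*\hat m$ separately before combining.
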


\begin{proof}
1.\
Applying Lemma \ref{min} we know that
$u := \min \{\psi(t) :  t \in KL \}   > 0$. We then have on $Lk$,
$m*m \ge u\, m * \del_k$. In a similar fashion we will have a positive constant
$v$ such that $\hat{m}*\hat{m} \ge v\, \hat{m} * \del_k$. Thus we obtain 
$\rho * \del_k \le \frac{2}{u \wedge v} \rho*\rho$. Convolving with $\rho^{*n}$ on the left
we finally get
$$
\rho^{*n} * \del_k \le D \rho^{*(n+1)},
$$
with $D = \frac{2}{u \wedge v}$.

2.\ As in the proof of Lemma \ref{S}.
\end{proof}

\begin{cor}
For every $k \in K$ we have:
$$
\|S_k\| \le  \sqrt{DC}.
$$
\end{cor}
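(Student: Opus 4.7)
The plan is to mimic the proof of Lemma \ref{S} almost verbatim, substituting the bound from Lemma \ref{rho2}(1) for the crude $(2d+1)$-factor used in the discrete case. The structural parallel is perfect: in the finitely generated setting the key step was the pointwise inequality $\rho * \delta_a \le (2d+1)\,\rho*\rho$, and here Lemma \ref{rho2}(1) plays exactly the same role with constant $D$ in place of $2d+1$.

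Concretely, I would start from the definition
\begin{equation*}
\|S_k\xi\|^2 = \int_G \xi^2(gk)\, dw(g) = \sum_{n=1}^\infty p_n \int_G \xi^2(gk)\, d\rho^{*n}(g),
\end{equation*}
and recognize the inner integral as $\int_G \xi^2\, d(\rho^{*n}*\delta_k)$, since $(\mu*\delta_k)(B) = \mu(Bk^{-1})$ for any measure $\mu$ on $G$. Applying Lemma \ref{rho2}(1) gives
\begin{equation*}
\int_G \xi^2\, d(\rho^{*n}*\delta_k) \le D \int_G \xi^2\, d\rho^{*(n+1)}.
\end{equation*}

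Summing against the weights $p_n$ and using the ratio bound $p_n/p_{n+1} \le C$ yields
\begin{align*}
\|S_k\xi\|^2 &\le D \sum_{n=1}^\infty p_n \int_G \xi^2\, d\rho^{*(n+1)} \\
 &= D \sum_{n=1}^\infty \frac{p_n}{p_{n+1}}\, p_{n+1} \int_G \xi^2\, d\rho^{*(n+1)} \\
 &\le DC \sum_{m=2}^\infty p_m \int_G \xi^2\, d\rho^{*m} \le DC\, \|\xi\|^2,
\end{align*}
after which taking square roots gives $\|S_k\| \le \sqrt{DC}$.

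There is no real obstacle: the nontrivial inputs (Lemma \ref{min} producing the density lower bound, and the conversion into the convolution estimate Lemma \ref{rho2}(1)) have already been established; this corollary is just the bookkeeping step that packages them into an operator bound, exactly as Lemma \ref{S} did in the discrete case. The only thing to be careful about is which Haar convention is in force—$\lambda$ is right-invariant, so right multiplication by $k$ acts on measures as convolution with $\delta_k$ on the right, and the computation above is consistent with that.
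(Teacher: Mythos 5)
Your proof is correct and is essentially the paper's own argument: the paper derives the bound by repeating the computation of Lemma \ref{S} with the inequality $\rho^{*n}*\delta_k \le D\,\rho^{*(n+1)}$ of Lemma \ref{rho2} in place of the factor $2d+1$, exactly as you do, and your handling of the convolution identity and the ratio bound $p_n/p_{n+1}\le C$ matches that computation step for step.
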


For the main part of the proof of the theorem we will need a continuous version of the basic property 
of free actions of countable groups that we used in the previous section. Since this 
is probably less familiar we will spell this out in detail. 
\begin{defn}
Given a $G$ probability measure preserving action
$\Xb =   (X,\mathcal{B},\mu,\allowbreak \{T_g\}_{g \in G})$, and a compact neighborhood
$Q$ of $e \in G$, we say that a measurable subset $V \subset X$ is a {\em thin base for
a $Q$-tower}, if there is a Borel measure $\theta$ on $V$ such that the action map
$\al : Q \times V \to \al(Q \times V) \subset X$, given by $\al(g,v) =T_gv$, is one-to-one
and maps $ \ch_Q \la \times \theta $ to $\ch_{\al(Q\times V)}\mu$.
\end{defn}

The following property holds for free actions of locally compact groups.
If $Q$ is an arbitrary compact symmetric neighborhood of $e \in G$ and $E \subset X$
is any measurable subset with $\mu(E) > 0$ then, as the action is free,
by \cite[page 54]{OW}, $E$ contains a subset $V \subset E$ which is a thin base for a $Q$-tower.
The construction of the function $f$ on $X$ will be carried out by defining a sequence 
of finite valued functions $f_n$ on $Q_n$-towers as in the proof of  Theorem \ref{FG}. 
The function space $L_2(G,w)$ contains a countable set of finite valued functions that is dense 
and we can use them to define a countable collection of open balls $U_n$ as before. There is an
added technical complication which arises from the fact that the thin bases of the $Q$-towers
have measure zero. This is taken care of by the basic fact that in $L_2(G,w)$ the translation by
elements of $G$ is strongly continuous, so that having defined a function using a thin tower 
for a set of positive measure we will have 
functions which are very close to it.  

Having said all of this one can basically copy the proof that we spelled out in detail and establish
the theorem.

\section{$G = \cup K_n$}
In this short section we will indicate how one obtains a proof of a version of Theorem \ref{CG} 
for a locally compact, second countable group $G$ which is not compactly generated but has a 
rather simple structure: $G = \cup_{n=1}^\infty K_n$, where $K_1 < K_2 < \cdots $ is an increasing
sequence of compact open subgroups.

\begin{thm}\label{union}
The statement of Theorem \ref{CG} holds for every topological group $G$ as above. 
\end{thm}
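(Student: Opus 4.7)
The plan is to follow the template of Section 4 with the measure $\rho$ now built from the Haar measures of the subgroups $K_n$ rather than from a compactly supported generating neighborhood. Let $\la_n$ denote the normalized Haar measure on the compact open subgroup $K_n$, viewed as a probability measure on $G$. Pick positive weights $q_n$ with $\sum_{n=1}^\infty q_n = 1$ and set
$$
\rho = \sum_{n=1}^\infty q_n \la_n .
$$
Since each $K_n$ is a subgroup, $\la_n$ is symmetric, hence so is $\rho$; and $\supp \rho = \cup_n K_n = G$, so $\rho$ is trivially strictly aperiodic and Theorem \ref{JRT} applies to $A_\rho$. As in Section 4, set $w = \sum_{n=1}^\infty p_n \rho^{*n}$ with $\sum p_n = 1$ and $p_n/p_{n+1} \le C$, and let $H = L_2(G, w)$.

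The key step, playing the role of Lemma \ref{rho2}, is to produce for each $b \in K_m$ a constant $M_m$ such that $\rho * \del_b \le M_m \rho$; convolving on the left with $\rho^{*(n-1)}$ then gives $\rho^{*n} * \del_b \le M_m \rho^{*n}$, and hence $w * \del_b \le M_m w$, which yields a uniform bound on $\|S_b\|$ as in Lemma \ref{S}. The inequality $\rho * \del_b \le M_m \rho$ rests on the algebraic fact that $\la_n$ is invariant under translation by $K_n$: for $n \ge m$ one has $\la_n * \del_b = \la_n$, while for $n < m$ the measure $\la_n * \del_b$ is supported in a single coset of $K_n$ inside $K_m$, whose contribution is dominated by a multiple of $\la_m$. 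Writing $\psi := d\rho/d\la$ for a fixed right-Haar measure $\la$ on $G$, the density $\psi$ is piecewise constant on the shells $K_n \setminus K_{n-1}$, and setting $N(g) := \min\{n : g \in K_n\}$ one checks directly that $\psi(gb) = \psi(g)$ when $N(g) > m$, and that $\psi(gb)/\psi(g)$ is bounded by an explicit constant depending only on $m$ when $N(g) \le m$; this supplies the uniform constant $M_m$.

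With bounded shift operators in place, the construction of the desired measure $\nu$ on $H$ follows the inductive scheme of Theorem \ref{FG} together with the thin-$Q$-tower machinery of the proof of Theorem \ref{CG}. The compact open subgroups are especially convenient as $Q$-neighborhoods: a $K_n$-tower is simply a disjoint union of cosets $K_n v$ over a thin base $V$, and freeness of the action produces such a $V$ inside any measurable set of positive measure exactly as in Section 4. Choosing $\xi_{n+1}$ from a countable dense set of simple functions each supported on finitely many cosets, and at stage $n$ picking $m_n$ large enough that $\supp \xi_{n+1} \subset K_{m_n}$, one uses a $K_{m_n}$-tower to execute the $(n+1)$-th step; the hitting of $U_{n+1}$ and the distinguishing of $A_{n+1}$ from $X \setminus A_{n+1}$ then proceed verbatim.

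I expect the main obstacle to be purely bookkeeping: verifying the density comparison uniformly in $g$ with $M_m$ depending only on $m = N(b)$, and checking the strong continuity of $b \mapsto S_b$ on $L_2(G,w)$ (which follows easily by approximating $\xi \in H$ by functions that are constant on cosets of some $K_n$, for which $S_b\xi = \xi$ whenever $b \in K_n$) in order to pass from functions built on the measure-zero thin bases to nearby $L_2$ approximations, as in the closing remarks of the proof of Theorem \ref{CG}. Modulo these routine verifications the proof is the same as that of Theorem \ref{CG}.
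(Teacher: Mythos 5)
Your proposal is correct and follows essentially the same route as the paper: take $\rho=\sum_n q_n\la_n$ with $\la_n$ the normalized Haar measure of $K_n$, prove the key translation inequality for $b\in K_m$ by splitting into $n\ge m$ (where $\la_n*\del_b=\la_n$) and $n<m$ (where $\la_n*\del_b\le[K_m:K_n]\la_m$), and then run the machinery of Theorems \ref{FG} and \ref{CG} unchanged. The only (inessential) difference is that you dominate $\rho*\del_b$ by a multiple of $\rho$ itself rather than of $\rho*\rho$ as in the paper's lemma, which if anything streamlines the passage to $w*\del_b\le M_m\,w$.
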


\begin{proof}
As is the case of Theorem \ref{CG}, we only need to
demonstrate an analogue of the key inequality of Lemma \ref{rho2}.
The role of $\rho$ is now played by the probability measure
$$
\rho = \sum_{n=1}^\infty p_n \la_n,
$$
where $\la_n$ is the normalized Haar measure of the compact subgroup $K_n$.

\begin{lem}
For every $g_0 \in G$ there exists a constant $C_{g_0}$ such that
$$
\rho * \del_{g_0} \le C_{g_0} (\rho *\rho).
$$
\end{lem}

\begin{proof}
Let $m_0$ be the first integer with $g_0 \in K_{m_0}$. Then
$$
\rho * \del_{g_0}  = \sum_{n=1}^\infty p_n \la_n * \del_{g_0} =
\sum_{n=1}^{m_0 -1} p_n \la_n * \del_{g_0}  +  \sum_{n=m_0}^\infty p_n \la_n. 
$$
Moreover, for every $n \le m_0$ we have the inequality
$$
\la_n * \del_{g_0} \le [K_{m_0} : K_n] \la_{m_0} 
\le [K_{m_0} : K_1] \la_{m_0}.
$$

On the other hand an easy computation shows that
\begin{align*}
\rho * \rho & =  \sum_{i=1}^\infty \sum_{i=1}^\infty p_i p_j \la_i * \la_j \\
                 & = \sum_{i=1}^\infty \sum_{j=1}^\infty p_i p_j \la_{i\vee j} \\
                 & \ge   \sum_{k=1}^\infty \left(\sum_{i \le k} p_i \right)  p_k \la_k \\
                 & \ge p_1    \sum_{k=1}^\infty p_k \la_k.                             
\end{align*}
Thus taking $C_{g_0} = \frac{1}{p_1} + [K_{m_0} : K_1]$ we obtain the required inequality.
\end{proof}

Again we finish the proof as in Theorem \ref{CG}.
\end{proof}

\end{document}